\definecolor{webgreen}{rgb}{0,.5,0}
\numberwithin{equation}{section}
\def\N{{\mathds{N}}}
\def\Z{{\mathds{Z}}}
\def\1{{\bf 1}}
\def\id{\operatorname{id}}
\newtheorem{theorem}{Theorem}[section]
\newtheorem{lemma}[theorem]{Lemma}
\newtheorem{cor}[theorem]{Corollary}
\begin{document}

\title{{\bf Another generalization of Euler's arithmetic function and Menon's identity}}
\author{L\'aszl\'o T\'oth \\ \\ Department of Mathematics, University of P\'ecs \\
Ifj\'us\'ag \'utja 6, 7624 P\'ecs, Hungary \\ E-mail: {\tt ltoth@gamma.ttk.pte.hu}}
\date{}
\maketitle

\centerline{The Ramanujan Journal {\bf 57} (2022), 811--822}

\begin{abstract} We define the $k$-dimensional generalized Euler function $\varphi_k(n)$ as the number of ordered $k$-tuples
$(a_1,\ldots,a_k)\in \N^k$ such that $1\le a_1,\ldots,a_k\le n$ and both the product $a_1\cdots a_k$ and the sum $a_1+\cdots +a_k$
are prime to $n$. We investigate some of properties of the function $\varphi_k(n)$, and obtain a corresponding Menon-type identity.
\end{abstract}

{\sl 2010 Mathematics Subject Classification}: 11A07, 11A25, 11N37

{\sl Key Words and Phrases}: Euler's arithmetic function, Menon's identity, asymptotic formula

\section{Motivation}

Jordan's arithmetic function $J_k(n)$ is defined as the number of ordered $k$-tuples $(a_1,\ldots,a_k)\in \N^k$ such that
$1\le a_1,\ldots,a_k\le n$ and the gcd $(a_1,\ldots,a_k,n)=1$. It is well-known that $J_k(n)$ is multiplicative in $n$ and
$J_k(n)=n^k \prod_{p\mid n} (1-1/p^k)$. If $k=1$, then $J_1(n)=\varphi(n)$ is Euler's arithmetic function.

A Menon-type identity concerning the function $J_k(n)$, obtained by Nageswara Rao \cite{Nag1972}, is given by
\begin{equation} \label{Nag_id}
\sum_{\substack{a_1,\ldots,a_k=1\\ (a_1,\ldots, a_k,n)=1}}^n
(a_1-1,\ldots,a_k-1,n)^k = J_k(n) \tau(n) \quad (n\in \N),
\end{equation}
where $\tau(n)$ is the number of divisors of $n$. If $k=1$, then \eqref{Nag_id} reduces to Menon's original identity \cite{Men1965}.

Euler's arithmetic function and Menon's identity have been generalized in various directions by several authors. See, e.g., the books \cite{McC1986,SC2004}, the papers \cite{JWActa,Nag1972,Sit1978,Tot2019,WJRam} and their references.

The function $X(n)= \# \{(a,b)\in \N^2: 1\le a,b\le n, (ab,n)=(a+b,n)=1 \}$ is an analog of Euler's $\varphi$-function, and was
introduced by Arai and Gakuen \cite{AG}. It was shown by Carlitz \cite{C} that the function $X(n)$ is multiplicative and
\begin{equation*}
X(n)= n^2 \prod_{p\mid n} \left(1-\frac1{p} \right) \left(1-\frac{2}{p} \right) \quad (n\in \N).
\end{equation*}

Note that if $n$ is even, then $X(n)=0$. The function $X(n)$ can also be given as
\begin{equation*}
X(n)= \varphi(n)^2 \sum_{d\mid n} \frac{\mu(d)}{\varphi(d)}  \quad (n\in \N),
\end{equation*}
$\mu$ denoting the M\"{o}bius function. The corresponding Menon-type identity
\begin{equation} \label{Sit_Menon}
\sum_{\substack{a,b=1 \\ (ab,n)=(a+b,n)=1}}^n (a+b-1,n) = X(n) \tau(n) \quad (n\in \N)
\end{equation}
was deduced by Sita Ramaiah \cite[Cor.\ 10.4]{Sit1978}. In fact, \eqref{Sit_Menon} is a corollary of a more general identity involving Narkiwicz-type
regular systems of divisors and $k$-reduced residue systems.

Recently, identity \eqref{Sit_Menon} was generalized by Ji and Wang \cite{JWActa,WJRam} to residually finite Dedekind domains, by using Narkiwicz-type
regular systems of divisors, and to the ring of algebraic integers, concerning Dirichlet characters modulo $n$, respectively. Note that in paper
\cite{WJRam} identity \eqref{Sit_Menon} is called the ``Arai-Carlitz identity''. However, Arai and Carlitz only considered the function $X(n)$ and did not
deduce such an identity. We refer to \eqref{Sit_Menon} as the Sita Ramaiah identity.

It is natural to introduce and to study the following $k$-dimensional generalization of the function $X(n)$, and to ask if the corresponding
generalization of the Sita Ramaiah identity is true for it. These were not investigated in the literature, as far as we know.
For $k\in \N$ we define the function $\varphi_k(n)$ as
\begin{equation} \label{def_varphi_k_n}
\varphi_k(n):= \sum_{\substack{a_1,\ldots, a_k=1 \\ (a_1\cdots a_k,n)=1\\ (a_1+\cdots+a_k,n)=1}}^n 1.
\end{equation}

Note that $\varphi_1(n)=\varphi(n)$ is Euler's function and $\varphi_2(n)=X(n)$ of above.
We investigate some of properties of the function $\varphi_k(n)$, and obtain a corresponding Menon-type identity.
Our main results are included in Section \ref{Section_Main_results}, and their proofs are presented in Sections
\ref{Section_Proofs_Euler} and \ref{Section_Proofs_Menon}.

We will use the following notations: $\id_k(n)=n^k$, $\1(n)=1$ ($n\in \N$), $\omega(n)$ will denote the number of distinct prime factors of
$n$, and ``$*$'' the Dirichlet convolution of arithmetic functions.

\section{Main results} \label{Section_Main_results}

In this paper we prove the following results.

\begin{theorem} \label{Theorem_Euler_gen_id} For every $k,n\in \N$,
\begin{align} \label{gen_1}
\varphi_k(n) = & \ \varphi(n)^k \prod_{p\mid n} \left(1- \frac1{p-1}+\frac1{(p-1)^2}-\cdots +(-1)^{k-1} \frac1{(p-1)^{k-1}}\right) \\
= & \ n^k \prod_{p\mid n} \left(1-\frac1{p} \right) \left(\left(1-\frac{1}{p}\right)^k - \frac{(-1)^{k}}{p^k} \right). \label{gen_2}
\end{align}
\end{theorem}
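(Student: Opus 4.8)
The plan is to exploit multiplicativity and reduce to prime powers. First I would show that $\varphi_k$ is multiplicative: if $(m,n)=1$, the Chinese Remainder Theorem sets up a bijection between $k$-tuples of residues modulo $mn$ and pairs of $k$-tuples modulo $m$ and modulo $n$. Both defining constraints in \eqref{def_varphi_k_n} split accordingly, since $(x,mn)=1$ is equivalent to $(x,m)=(x,n)=1$: coprimality of the product $a_1\cdots a_k$ is just coprimality of each $a_i$, and coprimality of the sum $a_1+\cdots+a_k$ likewise decouples. Hence the count factors as $\varphi_k(mn)=\varphi_k(m)\varphi_k(n)$, and it suffices to evaluate $\varphi_k(p^a)$ for a prime power $p^a$.

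Next I would reduce $\varphi_k(p^a)$ to a count of residues modulo $p$. Every constraint depends on the $a_i$ only through their residues modulo $p$ (coprimality to $p^a$ is coprimality to $p$, and the residue of the sum modulo $p$ depends only on the $a_i$ modulo $p$), and each nonzero residue class modulo $p$ is represented by exactly $p^{a-1}$ integers in $\{1,\dots,p^a\}$. Grouping the tuples by their residues therefore yields
\begin{equation*}
\varphi_k(p^a) = p^{(a-1)k}\, N_k(p),
\end{equation*}
where $N_k(p)$ denotes the number of $k$-tuples of nonzero residues $(r_1,\dots,r_k)$ modulo $p$ whose sum $r_1+\cdots+r_k$ is also nonzero modulo $p$.

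The key step, and the only real obstacle, is the evaluation of $N_k(p)$. I would obtain it by a one-line recursion. Let $S_k$ count the tuples of nonzero residues whose sum is $\equiv 0 \pmod p$. Appending one more nonzero coordinate shows $S_{k+1}=(p-1)^k-S_k$, because a tuple with nonzero partial sum has exactly one completion summing to $0$ while a tuple with zero partial sum has none; together with $S_1=0$, solving this linear recurrence gives $S_k=\big((p-1)^k+(-1)^k(p-1)\big)/p$, whence
\begin{equation*}
N_k(p) = (p-1)^k - S_k = \frac{(p-1)\big[(p-1)^k-(-1)^k\big]}{p}.
\end{equation*}
Equivalently, $S_k$ may be read off from the additive-character identity $S_k=\tfrac1p\sum_{t \bmod p}\big(\sum_{r\ne 0}e^{2\pi i tr/p}\big)^k$, whose inner sum equals $p-1$ for $t=0$ and $-1$ otherwise.

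Finally I would assemble the two claimed forms. By multiplicativity, $\varphi_k(n)=\prod_{p^a\,\|\,n}\varphi_k(p^a)=\prod_{p^a\,\|\,n}p^{(a-1)k}\,(p-1)\big[(p-1)^k-(-1)^k\big]/p$. Rewriting each prime-power factor as $p^{ak}\big(1-\tfrac1p\big)\big((1-\tfrac1p)^k-(-1)^k/p^k\big)$ gives \eqref{gen_2} at once. For \eqref{gen_1} I would factor out $\varphi(p^a)^k=p^{(a-1)k}(p-1)^k$; the leftover factor is $\big((p-1)^k-(-1)^k\big)/\big(p(p-1)^{k-1}\big)$, which I recognize as the collapsed finite geometric series $\sum_{j=0}^{k-1}(-1)^j/(p-1)^j$. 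Taking the product over $p\mid n$ then produces the first displayed form. Everything after the computation of $N_k(p)$ is routine bookkeeping.
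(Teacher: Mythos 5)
Your proof is correct, including the edge cases ($k=1$, where the geometric series degenerates to $1$, and $p=2$ with $k$ even, where your local factor correctly vanishes, consistent with $\varphi_k(n)=0$ for $k,n$ both even). But it takes a genuinely different route from the paper. The paper never invokes multiplicativity: it introduces the auxiliary function $\varphi_k(n,m)$ (counting tuples with $(a_1\cdots a_k,n)=1$ but $(a_1+\cdots+a_k,m)=1$ for a divisor $m\mid n$), detects the sum-coprimality condition by a M\"obius sum $\sum_{d\mid(a_1+\cdots+a_k,m)}\mu(d)$, and uses the classical count of reduced residues in an arithmetic progression (Lemma \ref{Lemma_cong}) to get the recursion $\varphi_k(n,m)=\varphi(n)\sum_{d\mid m}\frac{\mu(d)}{\varphi(d)}\varphi_{k-1}(n,d)$, after which induction on $k$ yields a product formula over $p\mid m$ and the theorem follows with $m=n$. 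You instead factor through the Chinese Remainder Theorem, reduce $\varphi_k(p^a)$ to the count $N_k(p)=p^{-(a-1)k}\varphi_k(p^a)$ of $k$-tuples of nonzero residues mod $p$ with nonzero sum, and evaluate it via the one-step recursion $S_{k+1}=(p-1)^k-S_k$ (or a character sum). Your argument is more elementary and more transparent locally — it gives a direct combinatorial meaning to the factor $(p-1)\bigl((p-1)^k-(-1)^k\bigr)/p$, whereas in the paper this emerges only after collapsing the geometric series. What the paper's heavier setup buys is reuse: the same M\"obius-plus-Lemma-\ref{Lemma_cong} machinery, upgraded to two simultaneous congruences (Lemma \ref{Lemma_cong_gen}), drives the recursion for $N_k(n,d,\delta)$ in the proof of the Menon-type identity (Theorem \ref{Theorem_Menon_gen}), where a purely local argument would not suffice as stated because the summand $f((a_1+\cdots+a_k-1,n))$ does not factor over prime powers for arbitrary $f$.
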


It is a consequence of Theorem \ref{Theorem_Euler_gen_id} that the function $\varphi_k(n)$ is multiplicative. Also, $\varphi_k(n)=0$ if and only
if $k$ and $n$ are both even. Further properties of $\varphi_k(n)$ can be deduced. Its average order is given by the next result.

\begin{theorem} \label{Theorem_asympt_varphi_k} Let $k\ge 2$ be fixed. Then
\begin{equation*}
\sum_{n\le x} \varphi_k(n)= \frac{C_k}{k+1}x^{k+1} +O\left(x^k (\log x)^{k+1}\right),
\end{equation*}
where
\begin{equation*}
C_k= \prod_p \left(1+\frac1{p^{k+1}}\left(\left(1-\frac1{p}\right)\left((p-1)^k-(-1)^k\right)-p^k \right) \right).
\end{equation*}
\end{theorem}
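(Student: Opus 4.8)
The plan is to write $\varphi_k$ as a Dirichlet convolution of $\id_k$ with a well-behaved multiplicative function and then apply the Dirichlet hyperbola method. Since $\varphi_k$ is multiplicative (a consequence of Theorem \ref{Theorem_Euler_gen_id}) and $\id_k$ is completely multiplicative with Dirichlet inverse $\mu\cdot\id_k$, I would set $g:=\varphi_k*(\mu\cdot\id_k)$, so that $\varphi_k=\id_k*g$ with $g$ multiplicative. Evaluating at prime powers gives $g(p^a)=\varphi_k(p^a)-p^k\varphi_k(p^{a-1})$; formula \eqref{gen_2} shows $\varphi_k(p^a)=p^{ak}A_p$ for $a\ge 1$, where $A_p:=(1-\tfrac1p)\bigl((1-\tfrac1p)^k-\tfrac{(-1)^k}{p^k}\bigr)$, whence $g(p^a)=0$ for $a\ge 2$ and $g(p)=p^k(A_p-1)$. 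Thus $g$ is supported on squarefree integers.

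Next I would expand
\[
\sum_{n\le x}\varphi_k(n)=\sum_{d\le x}g(d)\sum_{m\le x/d}m^k,
\]
and insert $\sum_{m\le y}m^k=\frac{y^{k+1}}{k+1}+O(y^k)$. This produces the main term $\frac{x^{k+1}}{k+1}\sum_{d\le x}g(d)/d^{k+1}$ together with an error $O\bigl(x^k\sum_{d\le x}|g(d)|/d^k\bigr)$. Completing the first sum to infinity, I would identify $\sum_{d=1}^\infty g(d)/d^{k+1}=\prod_p\bigl(1+g(p)/p^{k+1}\bigr)$ and check, using \eqref{gen_2}, that $g(p)/p^{k+1}=(A_p-1)/p$ coincides with the Euler factor defining $C_k$; hence this sum equals $C_k$ and the leading term is $\frac{C_k}{k+1}x^{k+1}$.

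Both remaining contributions---the tail $\frac{x^{k+1}}{k+1}\sum_{d>x}g(d)/d^{k+1}$ arising from completing the sum, and the error $O\bigl(x^k\sum_{d\le x}|g(d)|/d^k\bigr)$---reduce to estimating $T(x):=\sum_{d\le x}|g(d)|/d^k$. Since $d\mapsto|g(d)|/d^k$ is non-negative, multiplicative and supported on squarefrees with $|g(p)|/p^k=|A_p-1|$, I would bound $T(x)\le\prod_{p\le x}(1+|A_p-1|)\le\exp\bigl(\sum_{p\le x}|A_p-1|\bigr)$. A short expansion gives $A_p-1=-(k+1)/p+O(1/p^2)$, so by Mertens' theorem $\sum_{p\le x}|A_p-1|=(k+1)\log\log x+O(1)$ and therefore $T(x)\ll(\log x)^{k+1}$. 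The error term is then $O(x^k(\log x)^{k+1})$, while partial summation from $T$ bounds the completion tail by $O(x^{-1}(\log x)^{k+1})$, contributing $O(x^k(\log x)^{k+1})$ as well.

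The main obstacle is securing the exact exponent $k+1$ on the logarithm rather than a crude $x^{\varepsilon}$: this rests precisely on the expansion $A_p-1=-(k+1)/p+O(1/p^2)$ (where the hypothesis $k\ge 2$ makes $(-1)^k/p^k=O(1/p^2)$) feeding into Mertens' theorem. The only other point requiring care is the algebraic verification that the Euler factor of $\sum_d g(d)/d^{k+1}$ is exactly the factor appearing in $C_k$, which is a routine but slightly tedious manipulation of the two product forms \eqref{gen_1}--\eqref{gen_2}.
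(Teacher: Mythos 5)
Your proposal is correct, and its skeleton coincides with the paper's: the same decomposition $\varphi_k=\id_k * g$ with $g=\varphi_k*(\mu\,\id_k)$, the same computation showing $g$ vanishes on higher prime powers (so is supported on squarefrees), the same expansion of $\sum_{n\le x}\varphi_k(n)$, and the same identification of the Euler product with $C_k$. Where you genuinely diverge is in the error analysis, which is the crux of obtaining the exponent $k+1$ on the logarithm. The paper proves the pointwise inequality $-(k+1)p^{k-1}<g_k(p)<0$ (via the mean value theorem applied to $p^k-(p-1)^k$), deduces $|g_k(n)|\le (k+1)^{\omega(n)}n^{k-1}$, and then compares with the Piltz divisor function through $(k+1)^{\omega(n)}\le\tau_{k+1}(n)$, invoking the cited elementary estimates $\sum_{d\le x}\tau_{k+1}(d)/d\ll(\log x)^{k+1}$ and $\sum_{d>x}\tau_{k+1}(d)/d^2\ll(\log x)^k/x$. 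You instead bound the partial sums $T(x)=\sum_{d\le x}|g(d)|/d^k$ directly by the Euler product $\prod_{p\le x}(1+|A_p-1|)\le\exp\bigl(\sum_{p\le x}|A_p-1|\bigr)$ — valid since the summand is non-negative, multiplicative, and supported on squarefrees whose prime factors are all $\le x$ — and then feed the expansion $A_p-1=-(k+1)/p+O(1/p^2)$ (where $k\ge 2$ is indeed needed to absorb $(-1)^k/p^k$) into Mertens' theorem; the completion tail then follows by partial summation from $T$. Your route is self-contained (no external divisor-sum estimates) and does not need the sign information on $g(p)$, only $|A_p-1|\le(k+1)/p+O(1/p^2)$; the paper's route offloads the logarithmic bookkeeping onto standard $\tau_{k+1}$ estimates and incidentally yields the slightly sharper tail bound $(\log x)^k/x$, though this does not improve the final error term, which in both arguments is dominated by $x^kT(x)\ll x^k(\log x)^{k+1}$. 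One cosmetic remark: what you carry out is the plain one-variable convolution expansion, as in the paper, not the hyperbola method in its symmetric two-variable form.
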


\begin{cor} \textup{($k=2$)} We have
\begin{equation*}
\sum_{n\le x} X(n)= \frac{C_2}{3}x^3 +O\left(x^2 (\log x)^3\right),
\end{equation*}
where
\begin{equation*}
C_2= \prod_p \left(1-\frac{3}{p^2}+ \frac{2}{p^3} \right) \approx 0.286747.
\end{equation*}
\end{cor}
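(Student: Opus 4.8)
The plan is to deduce this directly from Theorem~\ref{Theorem_asympt_varphi_k} by specializing to $k=2$ and then simplifying the constant. Since $\varphi_2(n)=X(n)$, as recorded just after \eqref{def_varphi_k_n}, inserting $k=2$ into the statement of Theorem~\ref{Theorem_asympt_varphi_k} immediately gives
\[
\sum_{n\le x} X(n)=\frac{C_2}{3}x^{3}+O\bigl(x^{2}(\log x)^{3}\bigr),
\]
because the general main term $\frac{C_k}{k+1}x^{k+1}$ and error term $x^{k}(\log x)^{k+1}$ collapse to exactly the asserted ones. Thus the only genuine content is to verify that the general constant $C_k$, evaluated at $k=2$, equals the stated Euler product $\prod_p\bigl(1-3/p^2+2/p^3\bigr)$.

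For this I would compute the local factor of $C_k$ at $k=2$ by elementary algebra. Starting from
\[
1+\frac{1}{p^{3}}\left(\left(1-\frac1p\right)\bigl((p-1)^{2}-1\bigr)-p^{2}\right),
\]
I note that $(p-1)^{2}-1=p(p-2)$, so that $\left(1-\tfrac1p\right)p(p-2)=(p-1)(p-2)=p^{2}-3p+2$; subtracting $p^{2}$ leaves $-3p+2$, and dividing by $p^{3}$ and adding $1$ yields exactly $1-3/p^{2}+2/p^{3}$. Taking the product over all primes $p$ gives $C_2=\prod_p\bigl(1-3/p^2+2/p^3\bigr)$, as claimed.

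Finally, for the numerical value I would evaluate this Euler product. It is convenient to record the factorization
\[
1-\frac{3}{p^2}+\frac{2}{p^3}=\left(1-\frac1p\right)^{2}\left(1+\frac2p\right),
\]
which shows directly that each factor is $1+O(1/p^2)$, so the product converges; a standard numerical evaluation—accelerated, if desired, by factoring out suitable powers of $\zeta(2)$ and $\zeta(3)$ to leave a rapidly convergent remainder—then gives $C_2\approx 0.286747$. I expect essentially no obstacle here: the statement is a routine specialization of Theorem~\ref{Theorem_asympt_varphi_k}, and the only mild care required is in the accelerated numerical evaluation of a slowly convergent product.
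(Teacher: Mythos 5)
Your proposal is correct and matches the paper's (implicit) treatment: the corollary is simply Theorem~\ref{Theorem_asympt_varphi_k} specialized to $k=2$, and your algebraic simplification of the local factor $1+\frac{1}{p^{3}}\bigl(\bigl(1-\frac1p\bigr)\bigl((p-1)^{2}-1\bigr)-p^{2}\bigr)$ to $1-\frac{3}{p^{2}}+\frac{2}{p^{3}}$ is exactly right. The factorization $\bigl(1-\frac1p\bigr)^{2}\bigl(1+\frac2p\bigr)$ and the note on numerical evaluation are a nice, harmless addition beyond what the paper records.
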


We have the following generalization of Menon's identity.

\begin{theorem} \label{Theorem_Menon_gen} Let $f$ be an arbitrary arithmetic function. Then for every $k,n\in \N$,
\begin{equation} \label{Menon_gen_f}
\sum_{\substack{a_1,\ldots, a_k=1 \\ (a_1\cdots a_k,n)=1 \\ (a_1+\cdots+a_k,n)=1}}^n f((a_1+\cdots +a_k-1,n))
= \varphi_k(n)\sum_{d\mid n} \frac{(\mu \ast f)(d)}{\varphi(d)}.
\end{equation}
\end{theorem}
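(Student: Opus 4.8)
The plan is to reduce the identity to a single equidistribution statement about the sum $a_1+\cdots+a_k$ modulo the divisors of $n$. First I would write $f$ as a Dirichlet convolution: setting $g=\mu\ast f$, so that $f=\1\ast g$ and hence $f(m)=\sum_{e\mid m}g(e)$ for every $m\in\N$. Applying this with $m=(a_1+\cdots+a_k-1,n)$, and noting that a divisor $d$ of $(a_1+\cdots+a_k-1,n)$ is exactly a divisor $d$ of $n$ with $d\mid a_1+\cdots+a_k-1$, the left-hand side of \eqref{Menon_gen_f} becomes $\sum_{d\mid n} g(d)\, N(d)$ after interchanging the two summations, where $N(d)$ denotes the number of tuples $(a_1,\ldots,a_k)$ with $1\le a_i\le n$, $(a_1\cdots a_k,n)=1$, $(a_1+\cdots+a_k,n)=1$ and $a_1+\cdots+a_k\equiv 1\pmod d$. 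Comparing this with the right-hand side, it suffices to prove that $N(d)=\varphi_k(n)/\varphi(d)$ for every $d\mid n$.

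The quantity $N(d)$ counts the ``valid'' tuples, namely those with each $a_i$ and the sum $s=a_1+\cdots+a_k$ coprime to $n$, whose sum satisfies $s\equiv 1\pmod d$; by \eqref{def_varphi_k_n} there are $\varphi_k(n)$ valid tuples in total. To establish $N(d)=\varphi_k(n)/\varphi(d)$, I would exploit the multiplicative action of $(\Z/n\Z)^\ast$ on the set $T$ of valid tuples: for $c$ coprime to $n$ the map $(a_1,\ldots,a_k)\mapsto(ca_1\bmod n,\ldots,ca_k\bmod n)$ preserves all the defining conditions (each $ca_i$ and the new sum $cs$ remain coprime to $n$) and is therefore a bijection of $T$. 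Reducing modulo $d$, this map sends $s\bmod d$ to $cs\bmod d$, so the ``sum mod $d$'' map $\sigma\colon T\to(\Z/d\Z)^\ast$ is equivariant for the action of $(\Z/d\Z)^\ast$ induced through the reduction $(\Z/n\Z)^\ast\twoheadrightarrow(\Z/d\Z)^\ast$. Since $(\Z/d\Z)^\ast$ acts simply transitively on itself, multiplication by a suitable $c$ carries any fiber $\sigma^{-1}(u)$ bijectively onto any other fiber; hence all $\varphi(d)$ fibers are equinumerous with common size $|T|/\varphi(d)=\varphi_k(n)/\varphi(d)$. Taking $u=1$ gives $N(d)=\varphi_k(n)/\varphi(d)$.

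Substituting back yields $\sum_{d\mid n} g(d)\,\varphi_k(n)/\varphi(d)=\varphi_k(n)\sum_{d\mid n}(\mu\ast f)(d)/\varphi(d)$, which is precisely \eqref{Menon_gen_f}. The main obstacle is the equidistribution step $N(d)=\varphi_k(n)/\varphi(d)$, and within it the points to verify carefully are: that $s$ coprime to $n$ forces $s\bmod d\in(\Z/d\Z)^\ast$, so that $\sigma$ really lands in the unit group; that multiplication by $c$ genuinely permutes $T$ rather than merely mapping into it; and that the reduction map $(\Z/n\Z)^\ast\to(\Z/d\Z)^\ast$ is surjective for $d\mid n$, so that every target unit is realized by some admissible $c$ and the fibers are forced to be equal in size. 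Everything else is the routine convolution bookkeeping of the first paragraph.
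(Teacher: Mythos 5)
Your proof is correct, but it takes a genuinely different route from the paper's. Both arguments begin identically: writing $f=\1\ast(\mu\ast f)$ and interchanging sums reduces \eqref{Menon_gen_f} to counting, for each $d\mid n$, the tuples counted by $\varphi_k(n)$ whose sum is $\equiv 1\pmod d$. At that point the paper expands the remaining condition $(a_1+\cdots+a_k,n)=1$ by a second M\"obius sum, arriving at a double sum over $d,\delta\mid n$ of quantities $N_k(n,d,\delta)$, which it evaluates by a recursion in $k$ (via the simultaneous-congruence count of Lemma \ref{Lemma_cong_gen}) and an induction yielding explicit Euler products, finally reassembling the answer through the product formula \eqref{gen_1} of Theorem \ref{Theorem_Euler_gen_id}. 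You instead keep the condition $(a_1+\cdots+a_k,n)=1$ intact and prove the exact equidistribution $N(d)=\varphi_k(n)/\varphi(d)$ directly, by letting $(\Z/n\Z)^\ast$ act diagonally on the set $T$ of valid tuples and observing that the sum-mod-$d$ map $\sigma\colon T\to(\Z/d\Z)^\ast$ is equivariant under the reduction $(\Z/n\Z)^\ast\to(\Z/d\Z)^\ast$, so that all $\varphi(d)$ fibers are equinumerous. The three points you flag are all sound: coprimality of the sum to $n$ forces $\sigma$ to land in $(\Z/d\Z)^\ast$ because $d\mid n$; multiplication by $c$ is invertible on $T$ (multiply by $c^{-1}\bmod n$, noting the sum of the representatives of $ca_i$ is $\equiv c(a_1+\cdots+a_k)$ mod $n$, hence mod $d$); and surjectivity of the reduction map is classical --- indeed it is exactly the positivity of the count in Lemma \ref{Lemma_cong}, which you could cite rather than reprove. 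Your route buys several things: no induction on $k$, no second M\"obius expansion, and --- notably --- no explicit formula for $\varphi_k(n)$, since only the definition \eqref{def_varphi_k_n} enters; it also proves strictly more, namely that the sums $a_1+\cdots+a_k$ of valid tuples are uniformly distributed over the units modulo every $d\mid n$, which is the structural heart of Menon-type identities and is close in spirit to Menon's original group-action proof. What the paper's computational route buys in exchange is the explicit evaluation of $N_k(n,d,\delta)$ (Lemma \ref{Lemma_evaluation_N_k}) and the reuse of machinery already developed for Theorems \ref{Theorem_Euler_gen_id} and \ref{Theorem_asympt_varphi_k}. One small point worth making explicit in a final write-up: when $T$ is empty (i.e., $k$ and $n$ both even, so $\varphi_k(n)=0$), the equidistribution claim holds trivially with $N(d)=0$, so no case distinction is needed.
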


\begin{cor} \textup{($f(n)=n$)} For every $k,n\in \N$,
\begin{equation} \label{Menon_gen}
\sum_{\substack{a_1,\ldots, a_k=1 \\ (a_1\cdots a_k,n)=1 \\ (a_1+\cdots+a_k,n)=1}}^n (a_1+\cdots +a_k-1,n) = \varphi_k(n)\tau(n).
\end{equation}
\end{cor}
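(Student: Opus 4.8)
The corollary is literally the case $f=\id$ of Theorem \ref{Theorem_Menon_gen}, so formally it follows in one line: with $f=\id$ one has $\mu * f=\mu * \id=\varphi$, whence the inner sum collapses to $\sum_{d\mid n}\varphi(d)/\varphi(d)=\tau(n)$, giving \eqref{Menon_gen}. To exhibit the real content, the plan is to prove the general identity \eqref{Menon_gen_f} (equivalently, to derive \eqref{Menon_gen} from scratch), whose substance is a single counting lemma. Throughout I would write $\a=(a_1,\ldots,a_k)$ and $s=a_1+\cdots+a_k$, and set $g=\mu * f$, so that $f=\1 * g$ and therefore $f(m)=\sum_{e\mid m} g(e)$ for every $m\in\N$.

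First I would apply this expansion with $m=(s-1,n)$. Since $e\mid(s-1,n)$ means exactly that $e\mid n$ and $s\equiv 1\pmod e$, interchanging the two summations gives
\[
\sum_{\substack{1\le a_1,\ldots,a_k\le n\\ (a_1\cdots a_k,n)=1\\ (s,n)=1}} f\big((s-1,n)\big) \;=\; \sum_{e\mid n} g(e)\,N_e ,
\]
where $N_e$ counts the tuples $\a$ with $1\le a_i\le n$, $(a_1\cdots a_k,n)=1$, $(s,n)=1$ and the extra congruence $s\equiv 1\pmod e$. Everything then reduces to the values $N_e$, and I claim \emph{$N_e=\varphi_k(n)/\varphi(e)$ for each $e\mid n$}; note that $e=1$ recovers $N_1=\varphi_k(n)$ by the definition \eqref{def_varphi_k_n}.

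To prove the claim I would set up a transitive group action. Let $G=(\Z/n\Z)^{*}$ act on the set $A$ of tuples counted by $\varphi_k(n)$ — those $\a$ with every $a_i$ a unit modulo $n$ and $s$ a unit modulo $n$ — by $g\cdot\a=(ga_1,\ldots,ga_k)$ taken mod $n$. Each $ga_i$ is again a unit and the new sum is $gs$, still a unit, so the action is well defined on $A$. Consider $\psi\colon A\to(\Z/e\Z)^{*}$, $\psi(\a)=s\bmod e$, which lands in the units mod $e$ because $s$ is a unit mod $n$ and $e\mid n$. It is equivariant, $\psi(g\cdot\a)=\bar g\,\psi(\a)$, where $\bar g$ is the image of $g$ under the reduction $(\Z/n\Z)^{*}\to(\Z/e\Z)^{*}$. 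Since that reduction is surjective, $G$ acts transitively on $(\Z/e\Z)^{*}$ through $\psi$, so for any units $t,t'$ mod $e$ a suitable $g$ yields a bijection $\psi^{-1}(t)\to\psi^{-1}(t')$. Hence all $\varphi(e)$ fibres have equal size, and since they partition $A$ each has cardinality $|A|/\varphi(e)=\varphi_k(n)/\varphi(e)$. Taking $t=1$ gives $N_e=\varphi_k(n)/\varphi(e)$ (the relation holds trivially also when $\varphi_k(n)=0$, i.e.\ when $k,n$ are both even). Substituting back yields $\sum_{e\mid n}g(e)\varphi_k(n)/\varphi(e)=\varphi_k(n)\sum_{e\mid n}(\mu * f)(e)/\varphi(e)$, which is \eqref{Menon_gen_f}.

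The one step needing genuine care is this equidistribution lemma: the identity hinges on $s\bmod e$ being uniformly distributed over $(\Z/e\Z)^{*}$ as $\a$ ranges over $A$, which rests on the surjectivity of $(\Z/n\Z)^{*}\to(\Z/e\Z)^{*}$ for $e\mid n$. The remaining ingredients — the Möbius expansion $f=\1 * g$ and the interchange of summations — are purely formal. An alternative to the orbit argument would be to evaluate $N_e$ by additive-character detection of the congruence $s\equiv 1\pmod e$ combined with multiplicativity, but the group action above is shorter and makes the factor $1/\varphi(e)$ transparent.
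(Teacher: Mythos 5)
Your proposal is correct. The formal reduction is exactly the paper's: the corollary is the case $f=\id$ of Theorem \ref{Theorem_Menon_gen}, with $\mu*\id=\varphi$ collapsing the divisor sum to $\tau(n)$. But your proof of the underlying identity \eqref{Menon_gen_f} takes a genuinely different route. The paper applies M\"obius expansion twice --- once to write $f=(\mu*f)*\1$ and once more to detect the condition $(a_1+\cdots+a_k,n)=1$ --- which leads to the counts $N_k(n,d,\delta)$ constrained by two simultaneous congruences; these are then handled via the two-modulus counting Lemma \ref{Lemma_cong_gen}, a recursion in $k$ (Lemma \ref{Lemma_recursion_N_k}), and an inductive evaluation as explicit Euler products (Lemma \ref{Lemma_evaluation_N_k}), with formula \eqref{gen_1} of Theorem \ref{Theorem_Euler_gen_id} invoked at the very end to recognize the resulting product as $\varphi_k(n)$. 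You instead expand only $f$, keep the coprimality of the sum inside the counted set $A$, and evaluate the single-congruence count $N_e$ by an equidistribution argument: the unit group $(\Z/n\Z)^{*}$ acts on $A$ by simultaneous scaling, the map $\a\mapsto(a_1+\cdots+a_k)\bmod e$ is equivariant, and surjectivity of the reduction $(\Z/n\Z)^{*}\to(\Z/e\Z)^{*}$ forces all $\varphi(e)$ fibres to have the common size $\varphi_k(n)/\varphi(e)$ --- and you correctly dispose of the degenerate case $\varphi_k(n)=0$. I checked the key claim $N_e=\varphi_k(n)/\varphi(e)$ against Lemma \ref{Lemma_cong} (the case $k=1$) and small cases; it holds, and since $f\mapsto\mu*f$ is a bijection on arithmetic functions, that claim is in fact equivalent to Theorem \ref{Theorem_Menon_gen}, so nothing is missing. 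Your route needs no induction on $k$, no explicit Euler products, and --- notably --- no appeal to Theorem \ref{Theorem_Euler_gen_id} at all, which makes the factor $1/\varphi(e)$ transparent and yields a shorter, more conceptual proof; what the paper's heavier machinery buys is the explicit closed form for the two-congruence counts $N_k(n,d,\delta)$, which has independent interest and runs in parallel with its proof of Theorem \ref{Theorem_Euler_gen_id}.
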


If $k=1$, then \eqref{Menon_gen} reduces to Menon's identity and if $k=2$, then it gives the Sita Ramaiah identity \eqref{Sit_Menon}.

\section{Proofs of Theorems \ref{Theorem_Euler_gen_id} and \ref{Theorem_asympt_varphi_k}} \label{Section_Proofs_Euler}

We need the following lemmas.

\begin{lemma} \label{Lemma_cong} Let $n,d\in \N$, $d\mid n$ and let $r\in \Z$. Then
\begin{equation*}
\sum_{\substack{a=1\\ (a,n)=1\\ a\equiv r \, \text{\rm (mod $d$)} }}^n 1 =
\begin{cases} \displaystyle \frac{\varphi(n)}{\varphi(d)},
& \text{ if $(r,d)=1$}, \\ 0, & \text{ otherwise}.
\end{cases}
\end{equation*}
\end{lemma}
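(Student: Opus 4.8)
The plan is to interpret the sum group-theoretically. Since $d \mid n$, reduction modulo $d$ induces a well-defined group homomorphism $\pi \colon (\Z/n\Z)^* \to (\Z/d\Z)^*$. The sum in question counts the elements of $(\Z/n\Z)^*$ whose image under $\pi$ equals the residue class of $r$; hence it equals the cardinality of the fiber $\pi^{-1}(\overline{r})$ when $\overline{r}\in (\Z/d\Z)^*$, and is empty otherwise. This reframing immediately separates the two branches of the claimed formula.

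First I would dispose of the case $(r,d)>1$. If a prime $p$ divides both $r$ and $d$, then every $a\equiv r \pmod d$ satisfies $p\mid a$; since $d\mid n$ gives $p\mid n$, no such $a$ can be coprime to $n$, so the sum is $0$, matching the second branch.

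For $(r,d)=1$, the key step is to show that $\pi$ is surjective: then $\pi^{-1}(\overline{r})$ is a coset of $\ker\pi$, so every fiber has the common cardinality $|(\Z/n\Z)^*|/|(\Z/d\Z)^*| = \varphi(n)/\varphi(d)$, which is exactly the first branch. To prove surjectivity I would use the Chinese Remainder Theorem: factor $n=\prod_p p^{e_p}$ and $d=\prod_p p^{f_p}$ with $0\le f_p\le e_p$. Given a unit $\overline{r}$ modulo $d$, I construct $a$ by prescribing it modulo each $p^{e_p}$. For primes $p\mid d$ I choose $a\equiv r \pmod{p^{f_p}}$ (any lift to $p^{e_p}$ of this residue works, and since $(r,d)=1$ forces $p\nmid r$, automatically $p\nmid a$); for primes $p\mid n$ with $p\nmid d$ I set $a\equiv 1 \pmod{p^{e_p}}$. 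By the Chinese Remainder Theorem these local conditions determine a single $a$ modulo $n$ that is divisible by no prime factor of $n$, so $(a,n)=1$, and that satisfies $a\equiv r \pmod{p^{f_p}}$ for all $p\mid d$, hence $a\equiv r \pmod d$; thus $\pi(a)=\overline{r}$.

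The main obstacle is precisely this surjectivity, i.e.\ the ability to lift a residue coprime to $d$ to a residue coprime to $n$ lying in the prescribed class modulo $d$; the equal size of the fibers is then the standard fact that a surjective homomorphism of finite groups partitions its domain into cosets of the kernel. I expect no further difficulty beyond bookkeeping of the CRT decomposition.
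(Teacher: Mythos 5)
Your proof is correct. Note that the paper itself offers no proof of this lemma: it is stated as known, with a pointer to the literature (Apostol, Th.\ 5.32), where the standard argument runs by inclusion--exclusion --- one detects the condition $(a,n)=1$ with the M\"obius sum $\sum_{e\mid (a,n)}\mu(e)$ and counts, for each $e\mid n$, the multiples of $e$ in the arithmetic progression $r \bmod d$ up to $n$, which forces $(e,d)=1$ and yields $\frac{\varphi(n)}{\varphi(d)}$ after rearranging the divisor sum. Your route is genuinely different: you read the sum as the cardinality of a fiber of the reduction homomorphism $\pi\colon (\Z/n\Z)^* \to (\Z/d\Z)^*$, prove surjectivity of $\pi$ by a Chinese Remainder Theorem lift (prescribing $a\equiv r$ at primes dividing $d$ and $a\equiv 1$ at the remaining primes of $n$), and conclude that all fibers are cosets of $\ker\pi$ of common size $\varphi(n)/\varphi(d)$; the degenerate branch $(r,d)>1$ is correctly disposed of separately, since the class of $r$ then lies outside the codomain. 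Both your CRT bookkeeping and the coset count are sound. What each approach buys: yours is conceptual and makes it transparent, with no computation, why the count is independent of $r$; the M\"obius-based argument is more in the spirit of the paper, whose proofs (Lemmas \ref{Lemma_recursion_varphi}, \ref{Lemma_recursion_N_k}) repeatedly detect coprimality by $\sum_{e\mid(\cdot,\cdot)}\mu(e)$, and it is the template that the cited generalization (Lemma \ref{Lemma_cong_gen}) follows, where the factor $(d,e)$ and the compatibility condition $(d,e)\mid r-s$ emerge from the computation. Your method would also adapt to that generalization --- the image of $(\Z/n\Z)^*$ in $(\Z/d\Z)^*\times(\Z/e\Z)^*$ is exactly the set of pairs agreeing modulo $(d,e)$ --- but identifying that image requires essentially the same CRT analysis again, so neither approach dominates there.
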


Lemma \ref{Lemma_cong} is known in the literature, usually proved  by the inclusion-exclusion principle. See, e.g., \cite[Th.\ 5.32]{Apo1976}.
The following generalization and a different approach of proof was given in our paper \cite{Tot2019}.

\begin{lemma} {\rm (\cite[Lemma 2.1]{Tot2019}} \label{Lemma_cong_gen} Let $n,d,e\in \N$, $d\mid n$, $e\mid n$ and let $r,s\in \Z$. Then
\begin{equation*}
\sum_{\substack{a=1\\ (a,n)=1\\ a\equiv r \, \text{\rm (mod $d$)} \\ a\equiv s \, \text{\rm (mod $e$)} }}^n 1 =
\begin{cases} \displaystyle \frac{\varphi(n)}{\varphi(de)}(d,e),
& \text{ if $(r,d)=(s,e)=1$ and $(d,e) \mid r-s$}, \\ 0, & \text{ otherwise}.
\end{cases}
\end{equation*}
\end{lemma}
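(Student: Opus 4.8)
The plan is to collapse the pair of congruences into a single one and then invoke the already-proved Lemma \ref{Lemma_cong}. Write $g=(d,e)$ and $L=\lcm(d,e)=de/g$; since $d\mid n$ and $e\mid n$ we have $L\mid n$, which is exactly what will allow the previous lemma to be applied. By the Chinese Remainder Theorem for non-coprime moduli, the system $a\equiv r \pmod d$, $a\equiv s\pmod e$ is solvable if and only if $g\mid r-s$, and in that case its solutions form a single residue class $a\equiv t\pmod L$ for some $t$ determined modulo $L$. Thus if $g\nmid r-s$ the inner sum is empty and equals $0$, which already accounts for part of the ``otherwise'' branch.

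Assuming $g\mid r-s$, I would rewrite the sum as $\sum_{a=1,\,(a,n)=1,\,a\equiv t\,(L)}^n 1$ and apply Lemma \ref{Lemma_cong} with modulus $L\mid n$, getting $\varphi(n)/\varphi(L)$ when $(t,L)=1$ and $0$ otherwise. The next step is to translate the coprimality condition back to $r$ and $s$. Since every prime dividing $L=\lcm(d,e)$ divides $d$ or $e$, one has $(t,L)=1$ if and only if $(t,d)=1$ and $(t,e)=1$; and because $t\equiv r\pmod d$ and $t\equiv s\pmod e$, this is in turn equivalent to $(r,d)=1$ and $(s,e)=1$. Hence the sum vanishes in precisely the complementary cases, and is nonzero exactly when $(r,d)=(s,e)=1$ together with $g\mid r-s$, matching the stated case distinction.

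It then remains to evaluate the nonzero value $\varphi(n)/\varphi(L)$ and identify it with $\frac{\varphi(n)}{\varphi(de)}(d,e)$. For this I would use the elementary identity $\varphi(ab)=\varphi(a)\varphi(b)(a,b)/\varphi((a,b))$, which in the special case $(g,m)=g$ (i.e.\ $g\mid m$) collapses to $\varphi(gm)=g\,\varphi(m)$. Applying this with $m=L$, and noting $g=(d,e)\mid \lcm(d,e)=L$, gives $\varphi(de)=\varphi(gL)=g\,\varphi(L)$, so that $\varphi(n)/\varphi(L)=\varphi(n)(d,e)/\varphi(de)$, which is the asserted value.

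The argument is essentially routine once the CRT reduction is set up; the only points demanding a little care are the $\varphi$-bookkeeping of the last paragraph, namely verifying $\varphi(de)=(d,e)\,\varphi(\lcm(d,e))$, and making sure that the two sources of vanishing, an incompatible system ($g\nmid r-s$) versus a compatible system with a non-coprime combined residue ($(r,d)\ne1$ or $(s,e)\ne1$), together exhaust exactly the ``otherwise'' branch.
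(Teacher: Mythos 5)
Your proof is correct. Note that the paper itself gives no proof of this lemma: it is quoted from \cite[Lemma 2.1]{Tot2019}, and the paper only remarks that the proof there follows ``a different approach'' from the usual inclusion--exclusion argument for Lemma \ref{Lemma_cong}. Your argument is a valid, self-contained alternative, and it is structurally interesting because it inverts the paper's logical presentation: the paper treats Lemma \ref{Lemma_cong} as the special case $e=1$ of Lemma \ref{Lemma_cong_gen}, whereas you derive the general lemma \emph{from} the special one. All the steps check out: the non-coprime Chinese Remainder Theorem correctly gives solvability of the pair of congruences iff $(d,e)\mid r-s$, with solutions forming one class $t$ modulo $L=\lcm(d,e)$; the hypothesis $d\mid n$, $e\mid n$ gives $L\mid n$, so Lemma \ref{Lemma_cong} applies; the equivalence $(t,L)=1 \Leftrightarrow (r,d)=(s,e)=1$ holds because every prime of $L$ divides $d$ or $e$ and gcd conditions depend only on residue classes; and the totient bookkeeping is right, since $(d,e)\mid L$ and $de=(d,e)L$ give $\varphi(de)=\varphi((d,e)L)=(d,e)\,\varphi(L)$ via $\varphi(ab)=\varphi(a)\varphi(b)(a,b)/\varphi((a,b))$, so $\varphi(n)/\varphi(L)=\varphi(n)(d,e)/\varphi(de)$ as claimed. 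You also correctly verify that the two vanishing mechanisms (incompatible system versus compatible system with non-coprime residue) exactly exhaust the ``otherwise'' branch. What your route buys is economy within this paper: it shows Lemma \ref{Lemma_cong_gen} needs no external citation, being a formal consequence of Lemma \ref{Lemma_cong} plus CRT, at the modest cost of invoking the multiplicative identity for $\varphi$ on non-coprime arguments.
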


In the case $e=1$, Lemma \ref{Lemma_cong_gen} reduces to Lemma \ref{Lemma_cong}.

We need to define the following slightly more general function than $\varphi_k(n)$:
\begin{equation} \label{def_var_m_n}
\varphi_k(n,m):= \sum_{\substack{a_1,\ldots, a_k=1 \\ (a_1\cdots a_k,n)=1\\ (a_1+\cdots+a_k,m)=1}}^n 1.
\end{equation}

If $m=n$, then $\varphi_k(n,n)=\varphi_k(n)$, given by \eqref{def_varphi_k_n}.

\begin{lemma} {\rm (recursion formula for $\varphi_k(n,m)$)} \label{Lemma_recursion_varphi} Let $k\ge 2$ and $m\mid n$. Then
\begin{equation*}
\varphi_k(n,m)= \varphi(n) \sum_{d\mid m} \frac{\mu(d)}{\varphi(d)} \varphi_{k-1}(n,d).
\end{equation*}
\end{lemma}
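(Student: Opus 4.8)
The plan is to detect the coprimality condition $(a_1+\cdots+a_k,m)=1$ by the M\"obius function, peel off the last variable $a_k$, and then apply Lemma \ref{Lemma_cong}. First I would record the detection formula: for any integer $N$,
\begin{equation*}
\sum_{\substack{d\mid m\\ d\mid N}} \mu(d) = \begin{cases} 1, & (N,m)=1,\\ 0, & \text{otherwise},\end{cases}
\end{equation*}
which holds because the $d$ with $d\mid m$ and $d\mid N$ are exactly the divisors of $(N,m)$. Applying this with $N=a_1+\cdots+a_k$ in the definition \eqref{def_var_m_n} and interchanging the order of summation gives
\begin{equation*}
\varphi_k(n,m)= \sum_{d\mid m}\mu(d) \sum_{\substack{a_1,\ldots,a_k=1\\ (a_1\cdots a_k,n)=1\\ a_1+\cdots+a_k\equiv 0 \pmod{d}}}^n 1.
\end{equation*}

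Next I would fix $a_1,\ldots,a_{k-1}$ (each prime to $n$), set $s=a_1+\cdots+a_{k-1}$, and treat $a_k$ as the free variable. The conditions on $a_k$ become $(a_k,n)=1$ together with $a_k\equiv -s \pmod{d}$. Since $d\mid m$ and $m\mid n$, we have $d\mid n$, so Lemma \ref{Lemma_cong} applies with $r=-s$ and shows that this inner count equals $\varphi(n)/\varphi(d)$ when $(s,d)=1$ and equals $0$ otherwise. Summing over the remaining variables, the surviving terms are exactly the $(k-1)$-tuples with $(a_1\cdots a_{k-1},n)=1$ and $(a_1+\cdots+a_{k-1},d)=1$, which by \eqref{def_var_m_n} number $\varphi_{k-1}(n,d)$. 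Hence the inner sum attached to $d$ equals $\frac{\varphi(n)}{\varphi(d)}\varphi_{k-1}(n,d)$, and pulling the constant factor $\varphi(n)$ out front yields
\begin{equation*}
\varphi_k(n,m)= \varphi(n)\sum_{d\mid m}\frac{\mu(d)}{\varphi(d)}\varphi_{k-1}(n,d),
\end{equation*}
as claimed.

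The argument is routine once the setup is fixed, so I do not expect a serious obstacle; the only points needing care are verifying that Lemma \ref{Lemma_cong} is legitimately applicable (i.e.\ that $d\mid n$, which follows from $d\mid m\mid n$), and observing that $(-s,d)=1$ is equivalent to $(s,d)=1$ so that the surviving $(k-1)$-tuples match the definition of $\varphi_{k-1}(n,d)$ precisely.
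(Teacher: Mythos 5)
Your proposal is correct and follows essentially the same route as the paper's proof: M\"obius detection of the condition $(a_1+\cdots+a_k,m)=1$, interchange of summation, peeling off $a_k$, and an application of Lemma \ref{Lemma_cong} with $r=-s$ (using $d\mid m\mid n$), exactly as the author does. Your explicit remarks that $(-s,d)=1$ is equivalent to $(s,d)=1$ and that the surviving tuples match the definition \eqref{def_var_m_n} of $\varphi_{k-1}(n,d)$ are small clarifications the paper leaves implicit, but the argument is the same.
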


\begin{proof}[Proof of Lemma {\rm \ref{Lemma_recursion_varphi}}]
We have
\begin{equation*}
\varphi_k(n,m)= \sum_{\substack{a_1,\ldots, a_k=1 \\ (a_1\cdots a_k,n)=1}}^n \sum_{d\mid (a_1+\cdots+a_k,m)} \mu(d)
\end{equation*}
\begin{equation*}
= \sum_{d\mid m} \mu(d) \sum_{\substack{a_1,\ldots, a_k=1 \\ (a_1\cdots a_k,n)=1\\ a_1+\cdots +a_k\equiv 0 \text{ (mod $d$) }}}^n 1
\end{equation*}
\begin{equation*}
= \sum_{d\mid m} \mu(d) \sum_{\substack{a_1,\ldots, a_{k-1}=1 \\ (a_1\cdots a_{k-1},n)=1}}^n \sum_{\substack{a_k=1\\ (a_k,n)=1\\ a_k\equiv -a_1-\cdots -a_{k-1} \text{ (mod $d$)}}}^n  1.
\end{equation*}

By using Lemma \ref{Lemma_cong} we deduce that
\begin{equation*}
\varphi_k(n,m) = \sum_{d\mid m} \mu(d) \sum_{\substack{a_1,\ldots, a_{k-1}=1 \\ (a_1\cdots a_{k-1},n)=1\\ (a_1+\cdots+a_{k-1},d)=1}}^n \frac{\varphi(n)}{\varphi(d)}
\end{equation*}
\begin{equation*}
= \varphi(n) \sum_{d\mid m} \frac{\mu(d)}{\varphi(d)} \sum_{\substack{a_1,\ldots, a_{k-1}=1 \\ (a_1\cdots a_{k-1},n)=1\\ (a_1+\cdots+a_{k-1},d)=1}}^n 1
\end{equation*}
\begin{equation*}
= \varphi(n) \sum_{d\mid m} \frac{\mu(d)}{\varphi(d)} \varphi_{k-1}(n,d),
\end{equation*}
where $d\mid m$ and $m\mid n$ imply that $d\mid n$.
\end{proof}

\begin{proof}[Proof of Theorem {\rm \ref{Theorem_Euler_gen_id}}]
Let $k,n,m\in \N$ such that $m\mid n$. We show that
\begin{equation} \label{form_varphi_k_n_m}
\varphi_k(n,m) =  \varphi(n)^k \prod_{p\mid m} \left(1- \frac1{p-1}+\frac1{(p-1)^2}-\cdots +(-1)^{k-1} \frac1{(p-1)^{k-1}}\right).
\end{equation}

By induction on $k$. If $k=1$, then $\varphi_1(n,m)= \varphi(n)$, by its definition \eqref{def_var_m_n}. Let $k\ge 2$.
Assume that \eqref{form_varphi_k_n_m} holds for $k-1$ and prove it for $k$. We have, by using Lemma
\ref{Lemma_recursion_varphi},
\begin{equation*}
\varphi_k(n,m)= \varphi(n) \sum_{d\mid m} \frac{\mu(d)}{\varphi(d)} \varphi_{k-1}(n,d)
\end{equation*}
\begin{equation*}
= \varphi(n) \sum_{d\mid m} \frac{\mu(d)}{\varphi(d)} \varphi(n)^{k-1} \prod_{p\mid d} \left(1- \frac1{p-1}+\frac1{(p-1)^2}-\cdots +(-1)^{k-2}
\frac1{(p-1)^{k-2}}\right)
\end{equation*}
\begin{equation*}
= \varphi(n)^k \prod_{p\mid m}  \left(1+ \frac{\mu(p)}{\varphi(p)}\left(1 - \frac1{p-1}+\frac1{(p-1)^2}-\cdots +(-1)^{k-2}
\frac1{(p-1)^{k-2}}\right)\right)
\end{equation*}
\begin{equation*}
= \varphi(n)^k \prod_{p\mid m}  \left( 1- \frac1{p-1}+\frac1{(p-1)^2}-\cdots +(-1)^{k-1} \frac1{(p-1)^{k-1}}\right),
\end{equation*}
which proves formula \eqref{form_varphi_k_n_m}. Now choosing $m=n$, \eqref{form_varphi_k_n_m} gives identity \eqref{gen_1}, which can be rewritten as
\eqref{gen_2}.
\end{proof}

\begin{proof}[Proof of Theorem {\rm \ref{Theorem_asympt_varphi_k}}]
Let $\varphi_k= \id_k * g_k$, that is, $g_k=\varphi_k * \mu \id_k$. Here the function $g_k(n)$ is multiplicative and
for any prime power $p^\nu$ ($\nu \ge 1$),
\begin{equation*}
g_k(p^{\nu}) =\varphi_k(p^{\nu})- p^k\varphi_k(p^{\nu-1}).
\end{equation*}

We obtain from \eqref{gen_2} that for $\nu\ge 2$,
\begin{equation} \label{g_k_p_nu}
g_k(p^{\nu}) = p^{(\nu-1) k} \left(1-\frac1{p} \right) \left((p-1)^k-(-1)^k \right) - p^k p^{(\nu-2)k} \left(1-\frac1{p} \right)
\left((p-1)^k-(-1)^k \right)=0,
\end{equation}
and for $\nu =1$,
\begin{equation} \label{g_k_p}
g_k(p) =  \varphi_k(p) -p^k = \left(1-\frac1{p} \right) \left((p-1)^k-(-1)^k \right) - p^k
\end{equation}
\begin{equation*} 
=  -(k+1)p^{k-1} +\cdots +(-1)^k k,
\end{equation*}
a polynomial in $p$ of degree $k-1$, with leading coefficient $-(k+1)$. Actually, we have 
\begin{equation} \label{g_k_ineq}
-(k+1)p^{k-1} < g_k(p) < 0
\end{equation}
for every integer $k\ge 2$ and every prime $p\ge 2$. To see this, note that by Lagrange's mean value theorem,
\begin{equation*}
k(p-1)^{k-1}< p^k-(p-1)^k < kp^{k-1},
\end{equation*}
and from \eqref{g_k_p} we deduce that 
\begin{equation*} 
g_k(p) > \left(1-\frac1{p} \right) \left(p^k -kp^{k-1}- (-1)^k \right) - p^k
\end{equation*}
\begin{equation*} 
= - (k+1) p^{k-1} + kp^{k-2}- (-1)^k \left(1-\frac1{p} \right)> -(k+1)p^{k-1}.
\end{equation*}

On the other hand, $p^k-(p-1)^k>1$, $(p-1)^k-(-1)^k<p^k$ imply that
\begin{equation*} 
g_k(p) <  \left(1-\frac1{p}\right) p^k  -p^k <0.
\end{equation*}

According to \eqref{g_k_ineq}, $|g_k(p)|< (k+1)p^{k-1}$ holds true for every $k\ge 2$ and every $p\ge 2$, and by \eqref{g_k_p_nu} we deduce that
\begin{equation*}
|g_k(n)| \le (k+1)^{\omega(n)} n^{k-1} \quad (n\in \N).
\end{equation*}

To obtain the desired asymptotic formula we apply elementary arguments. We have
\begin{equation*}
\sum_{n\le x} \varphi_k(n)= \sum_{d\le x} g_k(d) \sum_{\delta \le x/d} \delta^k
\end{equation*}
\begin{equation*}
=\sum_{d\le x} g_k(d) \left( \frac1{k+1} \left(\frac{x}{d}\right)^{k+1}  +O \left(\left(\frac{x}{d}\right)^k\right)  \right)
\end{equation*}
\begin{equation*}
= \frac{x^{k+1}}{k+1}  \sum_{d=1}^{\infty} \frac{g_k(d)}{d^{k+1}} + O \left( x^{k+1} \sum_{d>x} \frac{|g_k(d)|}{d^{k+1}} \right) +
O\left(x^k \sum_{d\le x} \frac{|g_k(d)|}{d^k}\right).
\end{equation*}

Here the main term is $\frac{C_k}{k+1} x^{k+1}$ by using the Euler product formula. To evaluate the error terms consider 
the Piltz divisor function $\tau_{k+1}(n)$, representing the number of ordered $(k+1)$-tuples $(a_1,\ldots,a_{k+1})\in \N^{k+1}$ such that
$a_1\cdots a_{k+1}=n$. We have $\tau_{k+1}(p^\nu)\ge \tau_{k+1}(p)=k+1$ for every prime power $p^{\nu}$ ($\nu \ge 1$), and
$\tau_{k+1}(n)\ge (k+1)^{\omega(n)}$ for every $n\in \N$.

We obtain
\begin{equation*}
\sum_{d>x} \frac{|g_k(d)|}{d^{k+1}}\le \sum_{d>x} \frac{(k+1)^{\omega(d)}}{d^2} \le \sum_{d>x} \frac{\tau_{k+1}(d)}{d^2} \ll \frac{(\log x)^k}{x},
\end{equation*}
and
\begin{equation*}
\sum_{d\le x} \frac{|g_k(d)|}{d^k} \le \sum_{d\le x} \frac{(k+1)^{\omega(d)}}{d} \le \sum_{d\le x} \frac{\tau_{k+1}(d)}{d} \ll
(\log x)^{k+1}, 
\end{equation*}
by using known elementary estimates on the Piltz divisor function. See, e.g., \cite[Lemma 3]{Tot2002}. This completes the proof.
\end{proof}

\section{Proof of Theorem \ref{Theorem_Menon_gen}} \label{Section_Proofs_Menon}

Let $M_k(n)$ denote the sum on the left hand side of \eqref{Menon_gen_f}. We have by the convolutional identity $f=(\mu*f)*\1$,
\begin{equation*}
M_k(n)= \sum_{\substack{a_1,\ldots, a_k=1 \\ (a_1\cdots a_k,n)=1 \\ (a_1+\cdots+a_k,n)=1}}^n \sum_{d\mid (a_1+\cdots +a_k-1,n)} (\mu*f)(d)=
\sum_{d\mid n} (\mu*f)(d) \sum_{\substack{a_1,\ldots, a_k=1 \\ (a_1\cdots a_k,n)=1 \\ (a_1+\cdots+a_k,n)=1\\ a_1+\cdots +a_k\equiv 1
\text{ (mod $d$)} }}^n 1
\end{equation*}
\begin{equation*}
= \sum_{d\mid n} (\mu*f)(d) \sum_{\substack{a_1,\ldots, a_k=1 \\ (a_1\cdots a_k,n)=1 \\  a_1+\cdots +a_k\equiv 1
\text{ (mod $d$)} }}^n \sum_{\delta \mid (a_1+\cdots+a_k,n)} \mu(\delta),
\end{equation*}
that is
\begin{equation} \label{first_M_k(n)}
M_k(n)= \sum_{d\mid n} (\mu*f)(d) \sum_{\delta \mid n} \mu(\delta) N_k(n,d,\delta),
\end{equation}
where
\begin{equation*}
N_k(n,d,\delta):= \sum_{\substack{a_1,\ldots, a_k=1 \\ (a_1\cdots a_k,n)=1 \\ a_1+\cdots+a_k\equiv 1 \text{ (mod $d$)} \\
a_1+\cdots +a_k\equiv 0 \text{ (mod $\delta$)} }}^n 1.
\end{equation*}

Next we evaluate the sum $N_k(n,d,\delta)$, where $d\mid n$, $\delta \mid n$ are fixed. If $(d,\delta)>1$, then $N_k(n,d,\delta)=0$,
the empty sum. So, assume that $(d,\delta)=1$. If $k=1$, then by using Lemma \ref{Lemma_cong_gen} we deduce
\begin{equation} \label{N_1}
N_1(n,d,\delta):= \sum_{\substack{a_1=1 \\ (a_1,n)=1 \\ a_1\equiv 1 \text{ (mod $d$)} \\
a_1 \equiv 0 \text{ (mod $\delta$)} }}^n 1=
\begin{cases} \frac{\varphi(n)}{\varphi(d)}, & \text{ if $\delta=1$},\\
0, & \text{ otherwise},
\end{cases}
\end{equation}
since for each term of the sum $\delta \mid a_1$ and $\delta \mid n$, which gives $\delta \mid (a_1,n)=1$, so $\delta=1$.

\begin{lemma} {\rm (recursion formula for $N_k(n,d,\delta)$)} \label{Lemma_recursion_N_k} Let $k\ge 2$, $d\mid n$, $\delta \mid n$, $(d,\delta)=1$. Then
\begin{equation} \label{recurs_N_k}
N_k(n,d,\delta) = \frac{\varphi(n)}{\varphi(d)\varphi(\delta)}\sum_{j\mid d} \mu(j) \sum_{t\mid \delta} \mu(t) N_{k-1}(n,j,t).
\end{equation}
\end{lemma}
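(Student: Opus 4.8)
The plan is to isolate the last variable $a_k$ from the remaining $k-1$ variables and reduce the length of the tuple by one step via Lemma~\ref{Lemma_cong_gen}. First I would fix $a_1,\ldots,a_{k-1}$ with $(a_1\cdots a_{k-1},n)=1$ and write $s=a_1+\cdots+a_{k-1}$. The two linear congruence conditions in the definition of $N_k(n,d,\delta)$ then become conditions on $a_k$ alone, namely $a_k\equiv 1-s \pmod d$ and $a_k\equiv -s \pmod\delta$, together with $(a_k,n)=1$. Since $d\mid n$, $\delta\mid n$ and $(d,\delta)=1$, Lemma~\ref{Lemma_cong_gen} applies directly: the compatibility condition $(d,\delta)\mid (1-s)-(-s)=1$ is automatic, so the inner sum over $a_k$ equals $\varphi(n)/(\varphi(d)\varphi(\delta))$ precisely when $(1-s,d)=1$ and $(-s,\delta)=1$, and vanishes otherwise.

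Next I would rewrite the two surviving coprimality restrictions on $s$, namely $(s-1,d)=1$ and $(s,\delta)=1$, using the M\"obius identities $\sum_{j\mid(s-1,d)}\mu(j)$ and $\sum_{t\mid(s,\delta)}\mu(t)$. After pulling the factor $\varphi(n)/(\varphi(d)\varphi(\delta))$ out front, this expresses $N_k(n,d,\delta)$ as
\[
\frac{\varphi(n)}{\varphi(d)\varphi(\delta)}\sum_{\substack{a_1,\ldots,a_{k-1}=1\\ (a_1\cdots a_{k-1},n)=1}}^n \Biggl(\sum_{\substack{j\mid d\\ j\mid s-1}}\mu(j)\Biggr)\Biggl(\sum_{\substack{t\mid\delta\\ t\mid s}}\mu(t)\Biggr).
\]
Interchanging the order of summation so that $j\mid d$ and $t\mid\delta$ become the outer indices turns the divisibility conditions $j\mid s-1$ and $t\mid s$ into the congruences $s\equiv 1\pmod j$ and $s\equiv 0\pmod t$ on the inner sum over $a_1,\ldots,a_{k-1}$. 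That inner sum is exactly $N_{k-1}(n,j,t)$ by definition, which yields the claimed recursion \eqref{recurs_N_k}.

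The individual steps are essentially bookkeeping, so the points requiring care are verifying that the hypotheses of Lemma~\ref{Lemma_cong_gen} are genuinely met and tracking the residue classes correctly. I would note that $j\mid d\mid n$ and $t\mid\delta\mid n$, so each $N_{k-1}(n,j,t)$ is well-defined, and that $(j,t)=1$ follows from $j\mid d$, $t\mid\delta$ and $(d,\delta)=1$, so the coprimality hypothesis is inherited at the next level of the recursion. The main thing to get right is the translation of the inner-sum vanishing condition into the two coprimality constraints, i.e.\ $(1-s,d)=1 \iff (s-1,d)=1$ and $(-s,\delta)=1 \iff (s,\delta)=1$; once these are pinned down, the match with the definition of $N_{k-1}$ is immediate and no genuine difficulty remains.
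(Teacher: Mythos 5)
Your proof is correct and follows the paper's own argument essentially verbatim: fix $a_1,\ldots,a_{k-1}$, apply Lemma~\ref{Lemma_cong_gen} to the inner sum over $a_k$ (with the compatibility condition $(d,\delta)\mid 1$ automatic and $\varphi(d\delta)=\varphi(d)\varphi(\delta)$ by coprimality), then expand the surviving conditions $(s-1,d)=1$ and $(s,\delta)=1$ via M\"obius sums and interchange summation to recover $N_{k-1}(n,j,t)$. No gaps; the careful points you flag (hypotheses of Lemma~\ref{Lemma_cong_gen}, the sign-invariance of the gcd conditions) are exactly the right ones.
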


\begin{proof}[Proof of Lemma  {\rm \ref{Lemma_recursion_N_k}}]
We have
\begin{equation*}
N_k(n,d,\delta)= \sum_{\substack{a_1,\ldots, a_{k-1}=1 \\ (a_1\cdots a_{k-1},n)=1}}^n \sum_{\substack{a_k=1\\ (a_k,n)=1 \\ a_k\equiv 1-a_1-\cdots-a_{k-1}\text{ (mod $d$)} \\ a_k\equiv -a_1-\cdots -a_{k-1} \text{ (mod $\delta$)} }}^n 1.
\end{equation*}

Using that $(d,\delta)=1$ and applying Lemma \ref{Lemma_cong_gen} we deduce that
\begin{equation*}
N_k(n,d,\delta)= \sum_{\substack{a_1,\ldots, a_{k-1}=1 \\ (a_1\cdots a_{k-1},n)=1\\ (a_1+\cdots +a_{k-1}-1,d)=1\\ (a_1+\cdots +a_{k-1},\delta)=1 }}^n \frac{\varphi(n)}{\varphi(d)\varphi(\delta)}
\end{equation*}
\begin{equation*}
= \frac{\varphi(n)}{\varphi(d)\varphi(\delta)} \sum_{\substack{a_1,\ldots, a_{k-1}=1 \\ (a_1\cdots a_{k-1},n)=1}}^n \sum_{j\mid (a_1+\cdots +a_{k-1}-1,d)} \mu(j) \sum_{t\mid (a_1+\cdots +a_{k-1},\delta)} \mu(t)
\end{equation*}
\begin{equation*}
= \frac{\varphi(n)}{\varphi(d)\varphi(\delta)} \sum_{j\mid d} \mu(j) \sum_{t\mid \delta} \mu(t) \sum_{\substack{a_1,\ldots, a_{k-1}=1 \\ (a_1\cdots a_{k-1},n)=1 \\ a_1+\cdots +a_{k-1}\equiv 1 \text{ (mod $j$)} \\  a_1+\cdots +a_{k-1}\equiv 0 \text{ (mod $t$)} }}^n 1
\end{equation*}
\begin{equation*}
= \frac{\varphi(n)}{\varphi(d)\varphi(\delta)} \sum_{j\mid d} \mu(j) \sum_{t\mid \delta} \mu(t) N_{k-1}(n,j,t).
\end{equation*}
\end{proof}

\begin{lemma}  \label{Lemma_evaluation_N_k} Let $k\ge 2$, $d\mid n$, $\delta \mid n$, $(d,\delta)=1$. Then
\begin{equation} \label{eval_N_k}
N_k(n,d,\delta) = \frac{\varphi(n)^k}{\varphi(d)\varphi(\delta)} \prod_{p\mid d} \left(1- \frac1{p-1}+\frac1{(p-1)^2}-\cdots +(-1)^{k-1} \frac1{(p-1)^{k-1}}\right)
\end{equation}
\begin{equation*}
\times \prod_{p\mid \delta} \left(1- \frac1{p-1}+\frac1{(p-1)^2}-\cdots +(-1)^{k-2} \frac1{(p-1)^{k-2}}\right).
\end{equation*}
\end{lemma}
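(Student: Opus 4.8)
The plan is to prove \eqref{eval_N_k} by induction on $k\ge 2$, using the recursion formula \eqref{recurs_N_k} of Lemma \ref{Lemma_recursion_N_k} together with the explicit evaluation \eqref{N_1} of $N_1(n,d,\delta)$. Throughout, for $m\ge 1$ let me abbreviate by $S_m(p):=\sum_{i=0}^{m-1}(-1)^i/(p-1)^i$ the alternating partial sum occurring in the statement, with the conventions $S_1(p)=1$ and $S_0(p)=0$ (empty sum); then the target identity reads $N_k(n,d,\delta)=\frac{\varphi(n)^k}{\varphi(d)\varphi(\delta)}\prod_{p\mid d}S_k(p)\prod_{p\mid \delta}S_{k-1}(p)$. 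For the base case $k=2$, I would insert \eqref{N_1} into \eqref{recurs_N_k}: since $N_1(n,j,t)=0$ unless $t=1$, the inner sum over $t\mid \delta$ collapses to the single term $t=1$, leaving $N_2(n,d,\delta)=\frac{\varphi(n)^2}{\varphi(d)\varphi(\delta)}\sum_{j\mid d}\frac{\mu(j)}{\varphi(j)}$. As $\mu(j)/\varphi(j)$ is multiplicative in $j$, one gets $\sum_{j\mid d}\mu(j)/\varphi(j)=\prod_{p\mid d}\left(1-\tfrac{1}{p-1}\right)=\prod_{p\mid d}S_2(p)$, while $\prod_{p\mid \delta}S_1(p)=1$, matching the claim for $k=2$.

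For the induction step, I would fix $k\ge 3$ and assume \eqref{eval_N_k} for $k-1$. Substituting the hypothesis for $N_{k-1}(n,j,t)$ into \eqref{recurs_N_k} --- noting that $j\mid d$, $t\mid \delta$ and $(d,\delta)=1$ force $(j,t)=1$, so the hypothesis indeed applies --- the double sum separates into a product of two independent sums, giving
\[
N_k(n,d,\delta)=\frac{\varphi(n)^k}{\varphi(d)\varphi(\delta)}\left(\sum_{j\mid d}\frac{\mu(j)}{\varphi(j)}\prod_{p\mid j}S_{k-1}(p)\right)\left(\sum_{t\mid \delta}\frac{\mu(t)}{\varphi(t)}\prod_{p\mid t}S_{k-2}(p)\right).
\]
Each inner summand is multiplicative in its index, so by the Euler-product expansion the first factor equals $\prod_{p\mid d}\left(1-\tfrac{1}{p-1}S_{k-1}(p)\right)$ and the second equals $\prod_{p\mid \delta}\left(1-\tfrac{1}{p-1}S_{k-2}(p)\right)$, using $\mu(p)=-1$ and $\varphi(p)=p-1$.

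The crux is then the elementary telescoping identity $1-\tfrac{1}{p-1}S_{m-1}(p)=S_m(p)$, valid for all $m\ge 1$: multiplying $S_{m-1}(p)$ by $-1/(p-1)$ shifts and negates each term, and adding $1$ reconstructs the next alternating partial sum. Applying this with $m=k$ to the first product and with $m=k-1$ to the second yields exactly $\prod_{p\mid d}S_k(p)$ and $\prod_{p\mid \delta}S_{k-1}(p)$, completing the induction. I do not expect a genuine obstacle here: the only real content is the telescoping identity, which is precisely the manipulation already used in deriving \eqref{form_varphi_k_n_m} in the proof of Theorem \ref{Theorem_Euler_gen_id}. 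The main care needed is the bookkeeping of index ranges and the empty-product conventions, so that the degenerate cases $j=1$, $t=1$, and $\delta=1$ (and the shift $S_{k-2}\mapsto S_{k-1}$ when $k=3$) are handled correctly.
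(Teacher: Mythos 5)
Your proposal is correct and follows essentially the same route as the paper: induction on $k$ via the recursion of Lemma \ref{Lemma_recursion_N_k}, with the base case $k=2$ collapsing the $t$-sum through \eqref{N_1}, the double sum separating because $(j,t)=1$ is automatic from $(d,\delta)=1$, and the telescoping identity $1-\tfrac{1}{p-1}S_{m-1}(p)=S_m(p)$ finishing the Euler-product step. Your explicit $S_m$ notation and empty-sum conventions in fact make the degenerate cases cleaner than the paper's displayed computation (which contains a typographical slip in the $\delta$-product of its final display).
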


\begin{proof}[Proof of Lemma  {\rm \ref{Lemma_evaluation_N_k}}]
By induction on $k$. If $k=2$, then by the recursion \eqref{recurs_N_k} and \eqref{N_1},
\begin{equation*}
N_2(n,d,\delta) = \frac{\varphi(n)}{\varphi(d)\varphi(\delta)}\sum_{j\mid d} \mu(j) \sum_{t\mid \delta} \mu(t) N_1(n,j,t)
\end{equation*}
\begin{equation}
= \frac{\varphi(n)}{\varphi(d)\varphi(\delta)}\sum_{j\mid d} \mu(j) \sum_{\substack{t\mid \delta\\t=1}} \mu(t) \frac{\varphi(n)}{\varphi(j)}
\end{equation}
\begin{equation}
= \frac{\varphi(n)^2}{\varphi(d)\varphi(\delta)} \sum_{j\mid d} \frac{\mu(j)}{\varphi(j)}= \frac{\varphi(n)^2}{\varphi(d)\varphi(\delta)} \prod_{p\mid d} \left(1-\frac1{p-1} \right).
\end{equation}

Hence, the formula is true for $k=2$. Assume it holds for $k-1$, where $k\ge 3$. Then we have, by the recursion \eqref{recurs_N_k},
\begin{equation*}
N_k(n,d,\delta) = \frac{\varphi(n)}{\varphi(d)\varphi(\delta)} \sum_{j\mid d} \mu(j) \sum_{\substack{t\mid \delta\\ (t,j)=1}}
\mu(t) \frac{\varphi(n)^{k-1}}{\varphi(j)\varphi(t)}
\end{equation*}
\begin{equation*}
\times  \prod_{p\mid j}  \left(1- \frac1{p-1}+\frac1{(p-1)^2}-\cdots +(-1)^{k-2} \frac1{(p-1)^{k-2}}\right)
\end{equation*}
\begin{equation*}
\times \prod_{p\mid t} \left(1- \frac1{p-1}+\frac1{(p-1)^2}-\cdots +(-1)^{k-3} \frac1{(p-1)^{k-3}}\right),
\end{equation*}
where the condition $(t,j)=1$ can be omitted, since $j\mid d$, $t\mid \delta$ and $(d,\delta)=1$. We deduce that
\begin{equation*}
N_k(n,d,\delta)= \frac{\varphi(n)^k}{\varphi(d)\varphi(\delta)} \sum_{j\mid d} \frac{\mu(j)}{\varphi(j)}
\prod_{p\mid j}  \left(1- \frac1{p-1}+\frac1{(p-1)^2}-\cdots +(-1)^{k-2} \frac1{(p-1)^{k-2}}\right)
\end{equation*}
\begin{equation*}
\times \sum_{t\mid \delta} \frac{\mu(t)}{\varphi(t)} \prod_{p\mid t} \left(1- \frac1{p-1}+\frac1{(p-1)^2}-\cdots +(-1)^{k-3} \frac1{(p-1)^{k-3}}\right)
\end{equation*}
\begin{equation*}
= \frac{\varphi(n)^k}{\varphi(d)\varphi(\delta)} \prod_{p\mid d}
\left(1- \frac1{p-1}\left(1-\frac1{p-1}+\frac1{(p-1)^2}-\cdots +(-1)^{k-2} \frac1{(p-1)^{k-2}}\right)\right)
\end{equation*}
\begin{equation*}
\times  \prod_{p\mid \delta} \left(1- \frac1{p-1}\left(\frac1{p-1}+\frac1{(p-1)^2}-\cdots +(-1)^{k-3} \frac1{(p-1)^{k-3}}\right)\right),
\end{equation*}
giving \eqref{eval_N_k}, which completes the proof of Lemma \ref{Lemma_evaluation_N_k}.
\end{proof}

Now we continue the evaluation of $M_k(n)$. According to \eqref{first_M_k(n)} and Lemma \ref{Lemma_evaluation_N_k}, we have
\begin{equation*}
M_k(n)=\sum_{d\mid n}(\mu*f)(d) \sum_{\substack{\delta\mid n\\ (\delta,d)=1}}\mu(\delta)
\frac{\varphi(n)^k}{\varphi(d)\varphi(\delta)} \prod_{p\mid d} \left(1- \frac1{p-1}+\frac1{(p-1)^2}-\cdots +(-1)^{k-1} \frac1{(p-1)^{k-1}}\right)
\end{equation*}
\begin{equation*}
\times \prod_{p\mid \delta} \left(1- \frac1{p-1}+\frac1{(p-1)^2}-\cdots +(-1)^{k-2} \frac1{(p-1)^{k-2}}\right)
\end{equation*}
\begin{equation*}
=\varphi(n)^k \sum_{d\mid n} \frac{(\mu*f)(d)}{\varphi(d)}
\prod_{p\mid d} \left(1- \frac1{p-1}+\frac1{(p-1)^2}-\cdots +(-1)^{k-1} \frac1{(p-1)^{k-1}}\right)
\end{equation*}
\begin{equation*}
\times \sum_{\substack{\delta\mid n\\ (\delta,d)=1}} \frac{\mu(\delta)}{\varphi(\delta)} \prod_{p\mid \delta} \left(1- \frac1{p-1}+\frac1{(p-1)^2}-\cdots +(-1)^{k-2} \frac1{(p-1)^{k-2}}\right),
\end{equation*}
where the inner sum is
\begin{equation*}
\prod_{\substack{p\mid n\\ p\nmid d}} \left(1+\frac{\mu(p)}{\varphi(p)}\left(1- \frac1{p-1}+\frac1{(p-1)^2}-\cdots +(-1)^{k-2} \frac1{(p-1)^{k-2}}\right)\right)
\end{equation*}
\begin{equation*}
= \prod_{p\mid n} \left(1-\frac1{p-1}+\frac1{(p-1)^2}-\cdots +(-1)^{k-1} \frac1{(p-1)^{k-1}}\right)
\end{equation*}
\begin{equation*}
\times \prod_{p\mid d} \left(1-\frac1{p-1}+\frac1{(p-1)^2}-\cdots +(-1)^{k-1} \frac1{(p-1)^{k-1}}\right)^{-1}.
\end{equation*}

This leads  to
\begin{equation*}
M_k(n)=\varphi(n)^k \prod_{p\mid n} \left(1-\frac1{p-1}+\frac1{(p-1)^2}-\cdots +(-1)^{k-1} \frac1{(p-1)^{k-1}}\right)  \sum_{d\mid n} \frac{(\mu*f)(d)}{\varphi(d)}
\end{equation*}
\begin{equation*}
=\varphi_k(n) \sum_{d\mid n} \frac{(\mu*f)(d)}{\varphi(d)},
\end{equation*}
by using \eqref{gen_1}, finishing the proof of Theorem \ref{Theorem_Menon_gen}.

\section{Acknowledgment} The author thanks the referee for useful remarks.

\end{document}